\theoremstyle{definition}
\newtheorem{definition}{Definition}%[section]
\newtheorem{theorem}[definition]{Theorem}
\newtheorem{lemma}[definition]{Lemma}
\theoremstyle{remark}
\newtheorem{remark}[definition]{Remark}
\newcounter{enumctr}
\newcommand{\N}{\mathbb{N}}
\newcommand{\R}{\mathbb{R}}
\renewcommand{\phi}{\varphi}
\begin{document}

%\linenumbers
\title{\vspace*{-30mm}
Semi-dynamical systems generated by  autonomous Caputo fractional  differential equations}
\author{Thai  Son Doan\footnote{Email: dtson@math.ac.vn, Institute of Mathematics, Vietnam Academy of Science and Technology, 18 Hoang Quoc Viet, Hanoi, Viet Nam.}\; and Peter E. Kloeden\footnote{Email: kloeden@na.uni-tuebingen.de, Mathematisches Institut, Universit\"at T\"ubingen, 72074 T\"ubingen, Germany}.}

\date{}

\maketitle
\begin{abstract}
	An autonomous    Caputo fractional   differential equation  of order $\alpha\in(0,1)$  in $\mathbb{R}^d$ whose vector field satisfies a global  Lipschitz condition  is shown to  generate a semi-dynamical system in the  function space  $\mathfrak{C}$   of continuous functions $f:\R^+\rightarrow \R^d$   with the topology uniform convergence on compact subsets.
	This contrasts with a recent result of Cong \& Tuan \cite{cong}, which showed that such equations do not, in general, generate a dynamical system on the space $\mathbb{R}^d$.
\end{abstract}
\emph{2010 Mathematics Subject Classfication.} Primary:34A08, 34A10, 34B10; Secondary:  34C11, 34C35.

\noindent
\emph{Key words and phrases.}  Caputo fractional  differential equation, Existence and uniqueness solutions, Continuous dependence on the initial condition, Semi-dynamical systems, Volterra integral equations.

\section{Introduction}

The asymptotic behaviour of Caputo fractional   differential equations  in $\mathbb{R}^d$  has attracted much attention in the literature in recent years. It has often been asked if such equations generate an   autonomous (or nonautonomous,  if appropriate) dynamical system, since that would allow the theory of attractors to applied to them.  In 2017 Cong \& Tuan \cite{cong} showed that  such equations do not  generate a dynamical system on $\mathbb{R}^d$, except in special cases.

In this note, we observe that an ``autonomous"  Caputo fractional   differential equation (Caputo FDE), i.e., with a time independent vector field,   is formulated as an integral equation similar to a Volterra integral equation, but with an integrably singular rather than continuous kernel. This opens the door to Miller and Sell's formulation of  Volterra integral equations as autonomous semi-dynamical systems, see Miller \& Sell \cite{MS1} and Sell \cite[Chapter XI]{sell}, and enables us to  determine an autonomous semi-dynamical system  representation of   autonomous   Caputo FDEs  on the  function space  $\mathfrak{C}$   of continuous functions $f:\R^+\rightarrow \R^d$   with the topology uniform convergence on compact subsets.%

Consider an  autonomous   Caputo fractional   differential equation  of order $\alpha\in(0,1)$ in $\mathbb{R}^d$  of the following form
\begin{equation}\label{acfde}
^{\!C}D^{\alpha}_{0+} x(t)= g(x(t))
\end{equation}
where $g : \R^d\rightarrow \R^{d}$  is  globally Lipschitz  continuous.  We represent the solution  of the  Caputo FDE \eqref{acfde} with initial condition $x(0)$ $=$ $x_0$
by  the integral equation
\begin{equation}\label{AIE}
x(t)=x_0+\frac{1}{\Gamma(\alpha)}
\int_0^t (t-s)^{\alpha -1} g(x(s))ds,
\end{equation}
where $\Gamma(\alpha):=\int_0^\infty t^{\alpha-1}\exp{(-t)}dt$ is the Gamma function.

Define
$$
a(t,s) :=  \frac{1}{\Gamma(\alpha)}  (t-s)^{\alpha -1},  \qquad 0\leq s < t.
$$
Then   the integral  equation \eqref{AIE} is a special case of the (singular) Volterra integral equation
\begin{equation}\label{SVIE}
x(t)=   f(t)+   \int_0^t a(t,s)  g(x(s)) ds
\end{equation}
where   $f:\R^+\rightarrow \R^d$ is  a continuous function. In the case of an   Caputo FDE
\eqref{acfde} $f(t)$ $\equiv$ $f(0)$ $=$ $x_0$.

As preparation, we first establish the existence and uniqueness of solutions of the integral  equation \eqref{AIE} on any bounded time  interval  $0,T]$ for each $f$ $\in$
$\mathfrak{C}$  and then show their continuity in the initial data. For this we use the contraction mapping principle on the space $C([0,T],\rightarrow \R^d)$ with a norm weighted
by an  appropriate Mittag-Leffler function. The results assume that the vector field $g$ satisfies a global Lipschitz condition, but as in Sell \cite{sell} we establish the semi-group
property for a larger class of admissible vector fields,  which are assumed  to satisfy these preparatory results globally in  time.

The extension to ``nonautonomous"    Caputo fractional   differential equations and skew-product flows is  sketched in the final section.

\section{Preliminaries}\label{Section2}

The existence (local)  and uniqueness and continuity in $f$ $\in$ $C([0,T],\mathbb{R}^d)$   of solutions of  \eqref{SVIE} are given in   Miller \cite{miller} and Sell \cite{sell}   provided that $a(t,s)$ is continuous at $s$ $=$  $t$.  In our case  $a(t,s)$  is integrably singular, but   we can adapt  the proof in  Doan et al \cite{doan}, which is for It\^o stochastic versions of Caputo  FDE to give the  global  existence and uniqueness of solutions; see also \cite{cong}.

\subsection{Global existence and uniqueness solutions}  The global existence and uniqueness solutions of \eqref{acfde} and of the more general  integral equation will be established when the vector  field $g$ satisfies   the global Lipschitz condition:
\begin{itemize}
	\item [(H1)] There exists $L>0$ such that for all $x,y\in\R^d$, $t\in [0,\infty)$
	\begin{equation*}\label{L_cond}
	\|g(x)-g(y)\|\leq L\|x-y\|.
	\end{equation*}
\end{itemize}

The proof follows by a contraction mapping argument, which gives only local existence if the usual supremum norm on continuous functions is used. Unlike ODES, these local solutions cannot be
patched together to  provide a global solution  for Caputo  FDE.  The proof in Doan et al \cite{doan} for stochastic DEs  can be adapted to  this case  using a Banach space with a  suitable Bielecki
weighted norm
$$
\|x\|_{\gamma}:=\sup_{t\in [0,T]}
\frac{ \|x(t)\|}{E_{\alpha}(\gamma t^{\alpha})}\qquad \hbox{for all }  x \in C([0,T], \mathbb{R}^d).
$$
where  $\gamma>0$ is a suitable constant and the weight function is the Mittag-Leffler function $E_{\alpha}(\cdot)$  defined as follows:
\[
E_{\alpha}(t):= \sum_{k=0}^{\infty}\frac{t^k}{\Gamma(\alpha k+1)}\quad \hbox{for all}\; t\in\R.
\]

\begin{theorem}\label{Theorem1}
	Assume  that the vector field $g$ satisfies the global Lipschitz condition  in Assumption H1. Then for any $T$ $>$ $0$ and for each $f$ $\in$ $C([0,T],\mathbb{R}^d)$ the integral equation \eqref{SVIE} has a  unique
	solution $x(t,f)$ on the interval $[0,T]$.
\end{theorem}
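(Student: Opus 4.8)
The plan is to apply the Banach fixed point theorem to the operator
\[
(\mathcal{T}x)(t) := f(t) + \int_0^t a(t,s)\, g(x(s))\, ds, \qquad t \in [0,T],
\]
acting on the Banach space $C([0,T],\R^d)$ equipped with the Bielecki-type norm $\|\cdot\|_\gamma$ defined in the preamble, where $\gamma > 0$ is a constant to be chosen in terms of $L$ and $\alpha$. First I would check that $\mathcal{T}$ maps $C([0,T],\R^d)$ into itself: since $g$ is globally Lipschitz it is continuous, hence $s \mapsto g(x(s))$ is continuous and bounded on $[0,T]$, and the singularity $(t-s)^{\alpha-1}$ is integrable because $\alpha \in (0,1)$; continuity of $t \mapsto \int_0^t (t-s)^{\alpha-1} g(x(s))\,ds$ then follows by a standard dominated-convergence / splitting argument. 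Note that the two norms $\|\cdot\|_\infty$ and $\|\cdot\|_\gamma$ are equivalent on $[0,T]$ (since $1 \le E_\alpha(\gamma t^\alpha) \le E_\alpha(\gamma T^\alpha)$), so completeness is inherited and any $\|\cdot\|_\gamma$-fixed point is a genuine solution.

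The heart of the argument is the contraction estimate. For $x,y \in C([0,T],\R^d)$ and $t \in [0,T]$, using (H1),
\[
\|(\mathcal{T}x)(t) - (\mathcal{T}y)(t)\| \le \frac{L}{\Gamma(\alpha)} \int_0^t (t-s)^{\alpha-1} \|x(s) - y(s)\|\, ds
\le \frac{L}{\Gamma(\alpha)} \|x-y\|_\gamma \int_0^t (t-s)^{\alpha-1} E_\alpha(\gamma s^\alpha)\, ds.
\]
The key computation is the fractional-integral identity for the Mittag-Leffler function, namely that
\[
\frac{1}{\Gamma(\alpha)} \int_0^t (t-s)^{\alpha-1} E_\alpha(\gamma s^\alpha)\, ds = \frac{1}{\gamma}\bigl(E_\alpha(\gamma t^\alpha) - 1\bigr) \le \frac{1}{\gamma} E_\alpha(\gamma t^\alpha),
\]
which follows by integrating the power series for $E_\alpha$ term by term and using the Beta-function identity $\int_0^t (t-s)^{\alpha-1} s^{\alpha k}\, ds = \frac{\Gamma(\alpha)\Gamma(\alpha k + 1)}{\Gamma(\alpha k + \alpha + 1)} t^{\alpha k + \alpha}$. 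Dividing through by $E_\alpha(\gamma t^\alpha)$ and taking the supremum over $t \in [0,T]$ gives $\|\mathcal{T}x - \mathcal{T}y\|_\gamma \le \frac{L}{\gamma}\|x-y\|_\gamma$, so choosing any $\gamma > L$ makes $\mathcal{T}$ a contraction. Banach's fixed point theorem then yields the unique fixed point $x(\cdot,f) \in C([0,T],\R^d)$, which is the unique solution of \eqref{SVIE} on $[0,T]$.

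The main obstacle — and the reason the ordinary supremum norm fails — is precisely that with $\|\cdot\|_\infty$ one only gets the estimate $\|\mathcal{T}x - \mathcal{T}y\|_\infty \le \frac{L T^\alpha}{\Gamma(\alpha+1)}\|x-y\|_\infty$, which forces $T$ to be small; the Bielecki weight absorbs the growth in $T$ and turns this into a global-in-$T$ contraction in one step, bypassing the patching difficulty noted in the text. The only technical care needed is the term-by-term integration of the Mittag-Leffler series (justified by absolute convergence and Tonelli) and the verification that $\mathcal{T}$ genuinely lands in $C([0,T],\R^d)$ despite the integrable singularity of the kernel at $s = t$.
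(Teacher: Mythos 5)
Your proposal is correct and follows essentially the same route as the paper: a contraction argument in the Bielecki norm $\|\cdot\|_\gamma$ weighted by $E_\alpha(\gamma t^\alpha)$, with the same key bound $\frac{1}{\Gamma(\alpha)}\int_0^t (t-s)^{\alpha-1}E_\alpha(\gamma s^\alpha)\,ds \leq \frac{1}{\gamma}E_\alpha(\gamma t^\alpha)$ and the same choice $\gamma > L$. The only cosmetic difference is that you verify this identity by term-by-term integration of the Mittag-Leffler series, whereas the paper invokes the fact that $E_\alpha(\gamma t^\alpha)$ solves the linear Caputo equation $^{\!C}D^{\alpha}_{0+}x = \gamma x$; your added checks (that the operator maps $C([0,T],\mathbb{R}^d)$ into itself and that the norms are equivalent) fill in details the paper omits as standard.
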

\begin{proof}
	Since the proof is standard we just show the contraction property and how  the weighted norm is used.  Let $x, y, f\in$ $C([0,T],\mathbb{R}^d)$ with
	$$
	(\mathfrak{T}x)(t)=   f(t)+   \int_0^t a(t,s)  g(x(s))\;ds, \quad (\mathfrak{T}y)(t)=   f(t)+   \int_0^t a(t,s)  g(y(s))ds,
	$$
	for each $t \in [0,T]$. Then
	\begin{eqnarray*}
		\left\|(\mathfrak{T}x)(t)-(\mathfrak{T}y)(t)\right\|  & \leq &\int_0^t a(t,s) \left\|g(x(s))-  g(y(s))\right\| ds
		\\[1.4ex]
		& \leq & L \int_0^t a(t,s) \left\|x(s) -   y(s)\right\| ds .
	\end{eqnarray*}
	By definition of $\|\cdot\|_{\gamma}$,
	\begin{equation}\label{Eq1_Son}
	\left\|(\mathfrak{T}x)(t)-(\mathfrak{T}y)(t)\right\|
	\leq
	L\int_0^t a(t,s) E_{\alpha}(\gamma s^{\alpha})\;ds\; \|x-y\|_{\gamma}.
	\end{equation}
	Since $E_{\alpha}(\gamma t^{\alpha})$ is a solution of the linear fractional differential equation $^{\!C}D^{\alpha}_{0+} x(t)= \gamma x(t) $ it follows that
	\[
	E_{\alpha}(\gamma t^{\alpha})=1+\gamma\int_0^t a(t,s) E_{\alpha}(\gamma s^{\alpha})ds,
	\]
	which together with \eqref{Eq1_Son} implies that
	\[
	\frac{\left\|(\mathfrak{T}x)(t)-(\mathfrak{T}y)(t)\right\|}{E(\gamma t^{\alpha})}
	\leq \frac{L}{\gamma}\|x-y\|_{\gamma}.
	\]
	Hence, by choosing $\gamma> L$ the operator $\mathfrak T$ is a contraction on $(C([0,T],\R^d),\|\cdot\|_{\gamma})$ and its unique fixed point gives the unique solution of \eqref{acfde}. The proof is complete.
\end{proof}

\subsection{Continuous dependence of the solution on the   input function}   We can also show the continuous dependence of solutions on the input function $f$, but we do
not need the weighted norm for this. Instead, we will use  the  following version of Gronwall's lemma  from Diethelm \cite[Lemma 6.19]{diethelm}.
\begin{lemma}\label{fGron}
	Let $\alpha$, $\mu$, $\nu$, $T$ $\in$ $\mathbb{R}^+$ and let $\Delta$ $:$ $[0,T]$ $\rightarrow$ $\mathbb{R}$ be a continuous function satisfying the inequality
	$$
	|\Delta(t)| \leq \mu + \frac{\nu}{\Gamma(\alpha)}  \int_0^t (t-s)^{\alpha -1} |\Delta(s)|\,ds, \qquad t \in [0,T].
	$$
	Then
	$$
	|\Delta(t)| \leq \mu E_{\alpha}(\nu t ^{\alpha}), \qquad t \in [0,T].
	$$
\end{lemma}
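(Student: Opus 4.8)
The plan is to prove Lemma~\ref{fGron} by the classical successive–substitution argument that underlies the Mittag--Leffler function. Introduce the linear integral operator
\[
(\cA\psi)(t) := \frac{\nu}{\Gamma(\alpha)}\int_0^t (t-s)^{\alpha-1}\psi(s)\,ds, \qquad \psi\in C([0,T],\R),
\]
so the hypothesis reads $|\Delta|\le \mu\mathbf 1 + \cA|\Delta|$ pointwise on $[0,T]$, where $\mathbf 1$ is the constant function $1$. Since the kernel $(t-s)^{\alpha-1}$ is nonnegative, $\cA$ is monotone: $0\le\psi_1\le\psi_2$ forces $0\le\cA\psi_1\le\cA\psi_2$. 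Applying $\cA$ repeatedly to the hypothesis and using linearity and monotonicity, I would obtain for every $n\ge 1$
\[
|\Delta(t)| \;\le\; \mu\sum_{k=0}^{n-1}(\cA^k\mathbf 1)(t) \;+\; (\cA^n|\Delta|)(t), \qquad t\in[0,T].
\]

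Next I would evaluate the iterates $\cA^k\mathbf 1$ explicitly. Using the Beta–integral identity $\int_0^t (t-s)^{\alpha-1}s^{\beta}\,ds = \frac{\Gamma(\alpha)\Gamma(\beta+1)}{\Gamma(\alpha+\beta+1)}\,t^{\alpha+\beta}$ (valid for $\beta\ge 0$, proved by the substitution $s=tu$), one gets $\cA(t^\beta) = \frac{\nu\Gamma(\beta+1)}{\Gamma(\alpha+\beta+1)}t^{\alpha+\beta}$; starting from $\cA\mathbf 1 = \nu t^\alpha/\Gamma(\alpha+1)$ and inducting on $k$ yields
\[
(\cA^k\mathbf 1)(t) = \frac{(\nu t^\alpha)^k}{\Gamma(\alpha k+1)}, \qquad k\ge 0 .
\]
Hence $\mu\sum_{k=0}^{n-1}(\cA^k\mathbf 1)(t)\to \mu E_\alpha(\nu t^\alpha)$ as $n\to\infty$, the series converging for every $t$ because $E_\alpha$ is entire. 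For the remainder, put $M:=\max_{[0,T]}|\Delta|<\infty$ (finite by continuity of $\Delta$); monotonicity gives $0\le(\cA^n|\Delta|)(t)\le M(\cA^n\mathbf 1)(t)=M(\nu t^\alpha)^n/\Gamma(\alpha n+1)\le M(\nu T^\alpha)^n/\Gamma(\alpha n+1)$, which tends to $0$ uniformly on $[0,T]$ since the general term of the everywhere-convergent series for $E_\alpha(\nu T^\alpha)$ vanishes. Letting $n\to\infty$ in the displayed inequality then gives $|\Delta(t)|\le \mu E_\alpha(\nu t^\alpha)$ on $[0,T]$.

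The only points needing care — and the mild obstacle — are (i) the induction step for $(\cA^k\mathbf 1)(t)$, which rests on the Beta identity (equivalently the semigroup property of the Riemann--Liouville integral), and (ii) justifying that the remainder $\cA^n|\Delta|$ vanishes, which is immediate once one knows the radius of convergence of the $E_\alpha$ series is infinite. Both are routine. Alternatively, since the statement is precisely Lemma~6.19 of Diethelm~\cite{diethelm}, one may simply quote it; but the iteration argument above is short and self-contained.
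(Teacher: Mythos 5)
Your iteration argument is correct: the monotone operator $\cA$, the induction giving $(\cA^k\mathbf 1)(t)=(\nu t^\alpha)^k/\Gamma(\alpha k+1)$ via the Beta identity, and the vanishing of the remainder $\cA^n|\Delta|$ (bounded by $M(\nu T^\alpha)^n/\Gamma(\alpha n+1)\to 0$, since $E_\alpha$ is entire) together yield exactly the stated bound $|\Delta(t)|\le\mu E_\alpha(\nu t^\alpha)$. Note, however, that the paper does not prove this lemma at all: it is quoted verbatim from Diethelm \cite[Lemma 6.19]{diethelm}, which is the second option you mention at the end. So your contribution is a short self-contained proof by successive substitution --- essentially the standard argument behind Diethelm's lemma (a weakly singular kernel version of Gronwall via Picard-type iteration) --- whereas the paper's ``proof'' is a citation; your route buys self-containedness at the cost of a page of routine computation, and both are perfectly adequate for the way the lemma is used in Theorem \ref{ContinuityDependence}.
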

\begin{theorem}\label{ContinuityDependence}
	Assume  that the vector field $g$ satisfies the global Lipschitz condition  in Assumption H1. Then for any $T>0$ and for each $f$ $\in$ $C([0,T],\mathbb{R}^d)$ the unique
	solution $x(t,f)$ of the integral equation  \eqref{SVIE}  depends continuously on $f$ in the supremum norm.
\end{theorem}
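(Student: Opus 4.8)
The plan is to establish the stronger statement that the solution map $f\mapsto x(\cdot,f)$ is in fact Lipschitz continuous from $(C([0,T],\R^d),\|\cdot\|_\infty)$ into itself, where $\|\cdot\|_\infty$ denotes the supremum norm on $[0,T]$; continuity in the supremum norm then follows at once. Fix $f,\tilde f\in C([0,T],\R^d)$ and let $x(t):=x(t,f)$ and $\tilde x(t):=x(t,\tilde f)$ be the corresponding unique solutions of \eqref{SVIE} provided by Theorem \ref{Theorem1}. Subtracting the two copies of \eqref{SVIE}, applying the triangle inequality and using the global Lipschitz bound (H1) on $g$ gives, for every $t\in[0,T]$,
\begin{align*}
\|x(t)-\tilde x(t)\| &\le \|f(t)-\tilde f(t)\| + \int_0^t a(t,s)\,\|g(x(s))-g(\tilde x(s))\|\,ds \\
&\le \|f-\tilde f\|_\infty + \frac{L}{\Gamma(\alpha)}\int_0^t (t-s)^{\alpha -1}\,\|x(s)-\tilde x(s)\|\,ds .
\end{align*}

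Next I would apply the fractional Gronwall inequality of Lemma \ref{fGron} to the continuous function $\Delta(t):=\|x(t)-\tilde x(t)\|$ — its continuity is immediate from the continuity of $x$ and $\tilde x$ on $[0,T]$ — with the constants $\mu:=\|f-\tilde f\|_\infty$ and $\nu:=L$. This yields $\|x(t)-\tilde x(t)\|\le \|f-\tilde f\|_\infty\,E_{\alpha}(L t^{\alpha})$ for all $t\in[0,T]$. Since $E_{\alpha}$ is nondecreasing on $\R^+$, one may bound $E_{\alpha}(L t^{\alpha})\le E_{\alpha}(L T^{\alpha})$ and take the supremum over $t\in[0,T]$ to conclude
\begin{equation*}
\|x(\cdot,f)-x(\cdot,\tilde f)\|_\infty \;\le\; E_{\alpha}(L T^{\alpha})\,\|f-\tilde f\|_\infty ,
\end{equation*}
which gives the asserted continuous — indeed Lipschitz — dependence on the input function.

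There is essentially no serious obstacle here: the proof is a direct application of the singular Gronwall inequality once the difference of the two integral equations has been written down. The only points needing a word of care are (a) replacing the inhomogeneous term $\|f(t)-\tilde f(t)\|$ by the uniform quantity $\|f-\tilde f\|_\infty$, so that Lemma \ref{fGron} applies with a genuine constant $\mu$, and (b) verifying that $\Delta$ satisfies the continuity hypothesis of that lemma; both are routine. It is worth emphasising, as already noted in the text, that unlike the existence argument of Theorem \ref{Theorem1} this estimate does not require the Bielecki/Mittag-Leffler weighted norm — the $T$-dependent Lipschitz constant $E_{\alpha}(L T^{\alpha})$ is entirely adequate for continuity on each bounded interval, and this explicit constant will be convenient later when the semi-dynamical system on $\mathfrak{C}$ is assembled.
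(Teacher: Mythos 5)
Your proposal is correct and follows essentially the same route as the paper: subtract the two copies of \eqref{SVIE}, use the global Lipschitz condition (H1), and apply the fractional Gronwall inequality of Lemma \ref{fGron} to obtain the Lipschitz bound $\|x(\cdot,f)-x(\cdot,\tilde f)\|_\infty \le E_{\alpha}(L T^{\alpha})\,\|f-\tilde f\|_\infty$. If anything, your step (a) of first replacing $\|f(t)-\tilde f(t)\|$ by the constant $\mu=\|f-\tilde f\|_\infty$ is slightly more careful than the paper's write-up, which applies the lemma with the time-dependent quantity $\|f(t)-h(t)\|$ in place of a genuine constant.
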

\begin{proof}
	
	Let $x_f$,  $y_h$ $\in$  $\in$ $C([0,T],\mathbb{R}^d)$  be the unique solutions of  \eqref{SVIE}  corresponding to the inputs $f$, $h$ $\in$ $C([0,T],\mathbb{R}^d)$. Then,
	\[
	x_f(t)-y_g(t)=f(t)-g(t)+\int_{0}^t a(t,s)(g(x(s))-g(y(s)))ds.
	\]
	Thus,
	\[
	\|x_f(t)-y_g(t)\|
	\leq
	\|f(t)-g(t)\|+ L \int_{0}^t a(t,s)\|x(s)-y(s)\|\;ds.
	\]
	The fractional Gronwall Lemma \ref{fGron} then gives
	$$
	\left\|x_f(t)-y_h(t)\right\| \leq  \left\|f(t)-h(t)\right\|  E_{\alpha}(L t^{\alpha}), \qquad 0\leq t \leq T,
	$$
	so,  in the supremum  norm on $C([0,T],\mathbb{R}^d)$,
	$$
	\left\|x_f-y_h\right\|_{\infty} \leq  \left\|f-h \right\|_{\infty}   \sup_{0\leq t\leq T} E_{\alpha}(L t^{\alpha})
	\leq  \left\|f-h \right\|_{\infty}    E_{\alpha}(L T^{\alpha}).
	$$
	The proof is complete.
\end{proof}

\section{Semi-group formulation}
Let  $\mathfrak{C}$  be the Banach space of continuous functions $f:\R^+\rightarrow \R^d$   with the topology uniform convergence on compact subsets. This topology is induced by the  metric
\[
\rho(f,h):=\sum_{n=1}^{\infty}\frac{1}{2^n} \rho_n(f,h),
\]
where
\[
\rho_n(f,h):=\frac{\sup_{t\in[0,n]}\|f(t)-h(t)\|}{1+\sup_{t\in[0,n]}\|f(t)-h(t)\|}.
\]
We follow  Chapter XI,  pages 178-179,  in Sell \cite{sell} closely,  simplifying it to this ``autonomous" case, and show that the  singular  Volterra integral equation \eqref{SVIE} generates
an autonomous semi-dynamical system on the space $\mathfrak{C}$.

\smallskip

Given  $f$ $\in$ $\mathfrak{C}$ define the operator $T_\tau$ $:$ $\mathfrak{C}$ $\rightarrow$ $\mathfrak{C}$ for each $\tau$ $\in$ $\R^+$ by
\begin{equation}\label{Semigroup}
(T_{\tau} f)(\theta) =f(\tau + \theta)+   \int_0^{\tau} a(\tau+\theta,s)  g(x_f(s))\;ds,  \qquad \theta \in \R^+,
\end{equation}
where $x_f$ is a solution of the  singular  Volterra integral equation \eqref{SVIE} for this $f$, i.e.,
$$
x_f(t)=f(t)+   \int_0^t a(t,s)  g(x_f(s))\;ds.
$$

\begin{theorem}
	Suppose that  the vector field  $g$ is globally Lipschitz continuous.  The  integral equation  \eqref{SVIE}  generalisation of the  autonomous  Caputo fractional differential equation \eqref{acfde}
	generates a semi-group of continuous operators   $\{T_\tau, \tau \in \R^+\}$ on the space $\mathfrak{C}$.
\end{theorem}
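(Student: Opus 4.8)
The plan is to verify directly that the family $\{T_\tau : \tau \in \R^+\}$ defined by \eqref{Semigroup} is a semi-group of continuous maps on $\mathfrak{C}$, following Sell's scheme. Three things must be checked: (i) each $T_\tau$ maps $\mathfrak{C}$ into $\mathfrak{C}$ (i.e., $T_\tau f$ is continuous on $\R^+$), (ii) the semi-group identity $T_0 = \mathrm{id}$ and $T_{\tau_1 + \tau_2} = T_{\tau_1} \circ T_{\tau_2}$, and (iii) continuity of $(\tau, f) \mapsto T_\tau f$, or at least continuity of each $T_\tau$ in $f$ with respect to the metric $\rho$. For (i), I would fix $f$ and note that $x_f$ exists and is unique on every $[0,T]$ by Theorem~\ref{Theorem1}; then for fixed $\tau$ the map $\theta \mapsto f(\tau+\theta)$ is continuous, and $\theta \mapsto \int_0^\tau a(\tau+\theta,s) g(x_f(s))\,ds$ is continuous because for $\theta$ in a compact set the integrand $(\tau+\theta-s)^{\alpha-1} g(x_f(s))$ is dominated by an integrable function of $s$ on $[0,\tau]$ (the singularity at $s=\tau$ is only reached when $\theta=0$, and $\alpha-1 > -1$ keeps it integrable) and depends continuously on $\theta$, so dominated convergence applies.

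The crux is step (ii), the semi-group property, and this is where the ``cocycle'' structure of the singular Volterra equation must be used carefully. The key observation is that if $x_f$ solves \eqref{SVIE} for input $f$, then the shifted function $t \mapsto x_f(\tau + t)$ solves \eqref{SVIE} for the input $T_\tau f$. Indeed, writing $g_\tau := T_\tau f$, one computes for $\theta \ge 0$:
\[
x_f(\tau+\theta) = f(\tau+\theta) + \int_0^{\tau+\theta} a(\tau+\theta,s) g(x_f(s))\,ds
= g_\tau(\theta) + \int_\tau^{\tau+\theta} a(\tau+\theta,s) g(x_f(s))\,ds,
\]
and the substitution $s = \tau + r$ turns the last integral into $\int_0^\theta a(\tau+\theta,\tau+r) g(x_f(\tau+r))\,dr = \int_0^\theta a(\theta,r) g(x_f(\tau+r))\,dr$, using the translation invariance $a(\tau+\theta,\tau+r) = \frac{1}{\Gamma(\alpha)}(\theta-r)^{\alpha-1} = a(\theta,r)$ — this is exactly where autonomy of the kernel enters. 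Hence $\theta \mapsto x_f(\tau+\theta)$ satisfies the integral equation with input $g_\tau = T_\tau f$, and by uniqueness (Theorem~\ref{Theorem1}) it equals $x_{T_\tau f}$. With this identity $x_{T_\tau f}(\cdot) = x_f(\tau + \cdot)$ in hand, the composition $T_{\tau_2}(T_{\tau_1} f)$ unwinds: evaluating $(T_{\tau_2} T_{\tau_1} f)(\theta) = (T_{\tau_1}f)(\tau_2+\theta) + \int_0^{\tau_2} a(\tau_2+\theta,s) g(x_{T_{\tau_1}f}(s))\,ds$, substituting the definition of $T_{\tau_1}f$ and $x_{T_{\tau_1}f}(s) = x_f(\tau_1 + s)$, and changing variables in both integrals, one recombines the two integration ranges $[0,\tau_1]$ and $[\tau_1, \tau_1+\tau_2]$ into $[0,\tau_1+\tau_2]$ to obtain $f(\tau_1+\tau_2+\theta) + \int_0^{\tau_1+\tau_2} a(\tau_1+\tau_2+\theta,s) g(x_f(s))\,ds = (T_{\tau_1+\tau_2}f)(\theta)$. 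The case $\tau=0$ is immediate since the integral vanishes and $x_f(0) = f(0)$.

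For step (iii), continuity in $f$, I would fix $\tau$ and a compact interval $[0,N]$ for $\theta$, so all arguments $\tau+\theta$ lie in $[0, \tau+N]$. Theorem~\ref{ContinuityDependence} gives $\|x_f - x_h\|_{\infty,[0,\tau+N]} \le E_\alpha(L(\tau+N)^\alpha)\,\|f-h\|_{\infty,[0,\tau+N]}$, and then
\[
\|(T_\tau f)(\theta) - (T_\tau h)(\theta)\| \le \|f(\tau+\theta)-h(\tau+\theta)\| + L\!\int_0^\tau a(\tau+\theta,s)\|x_f(s)-x_h(s)\|\,ds,
\]
where the integral is bounded by $\|x_f-x_h\|_\infty$ times $\int_0^\tau a(\tau+\theta,s)\,ds \le \frac{(\tau+N)^\alpha}{\Gamma(\alpha+1)}$, uniformly in $\theta \in [0,N]$. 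Hence $\sup_{\theta\in[0,N]}\|(T_\tau f)(\theta)-(T_\tau h)(\theta)\|$ is bounded by a constant (depending on $\tau,N,L,\alpha$) times $\sup_{t\in[0,\tau+N]}\|f(t)-h(t)\|$, which controls each $\rho_n(T_\tau f, T_\tau h)$ and therefore $\rho(T_\tau f, T_\tau h)$; this yields continuity of $T_\tau$ in the metric $\rho$. I expect the main obstacle to be purely bookkeeping: keeping the variable substitutions and the splitting of integration intervals straight in step (ii), and making sure the integrable-singularity estimates are invoked correctly (always using $\alpha - 1 > -1$) so that dominated convergence and the uniform bounds are legitimate. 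If one wants joint continuity in $(\tau,f)$ as well, one additionally checks continuity of $\tau \mapsto T_\tau f$, which follows from continuity of $x_f$ and another dominated-convergence argument on the integral term, but for the semi-group-of-continuous-operators statement as phrased, separate continuity in $f$ together with the algebraic semi-group law suffices.
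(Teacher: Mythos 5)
Your proposal is correct and follows essentially the same route as the paper: the semi-group law is obtained from the shifted-solution identity $x_{T_\tau f}(\cdot)=x_f(\tau+\cdot)$ (via uniqueness and the translation invariance $a(\tau+\theta,\tau+r)=a(\theta,r)$) together with recombining the integration ranges, and continuity of $T_\tau$ in $f$ comes from Theorem~\ref{ContinuityDependence} plus the explicit bound on $\int_0^\tau a(\tau+\theta,s)\,ds$. Your additional check that $T_\tau f\in\mathfrak{C}$ and your direct compact-interval estimate in place of the paper's $\rho_n$ manipulation are only cosmetic differences.
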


\begin{proof}
	We first show that $T_{\tau}:\mathcal C\rightarrow \mathcal C$ is continuous. Let $f,h\in \mathcal C$. Then, by \eqref{Semigroup}
	\begin{eqnarray*}
		\|T_{\tau}f(\theta)-T_{\tau}g(\theta)\|
		&\leq&
		\|f(\tau+\theta)-h(\tau+\theta)\|\\
		&&+ L \sup_{s\in[0,\tau]}\|x_f(s)-x_h(s)\|\int_0^{\tau} a(\tau+\theta,s) \;ds,
	\end{eqnarray*}
	where $L$ is the Lipschitz constant of $g$. A direct computation yields that
	\[
	\int_0^{\tau} a(\tau+\theta,s) \;ds=\frac{1}{\Gamma(\alpha)}\int_0^{\tau} (\tau+\theta-s)^{\alpha-1}\;ds=\frac{1}{\alpha\,\Gamma(\alpha)}\left((\tau+\theta)^{\alpha}-\theta^{\alpha}\right).
	\]
	Now, choose and fix $k\in\N$ with $k\geq \tau$. Then,
	\[
	\sup_{\theta\in[0,n]}\|T_{\tau}f(\theta)-T_{\tau}g(\theta)\|
	\leq
	\sup_{t\in [0,k+n]}\|f(t)-g(t)\|
	+\frac{L(k+n)^{\alpha}}{\alpha\Gamma(\alpha)} \sup_{s\in[0,\tau]}\|x_f(s)-x_h(s)\|.
	\]
	Using inequality $\frac{x}{1+x}\leq \frac{y}{1+y}+z$ provided that $x,y,z$ are non-negative and $x\leq y+z$ yields that
	\[
	\rho_n(T_{\tau}f,T_{\tau}g)\leq \rho_{n+k} (f,g)+\frac{L(k+n)^{\alpha}}{\alpha\Gamma(\alpha)} \sup_{s\in[0,\tau]}\|x_f(s)-x_h(s)\|.
	\]
	Thus,
	\[
	\rho(T_{\tau}f,T_{\tau}g)\leq 2^k \rho(f,g)+ \frac{Lc}{\alpha\Gamma(\alpha)} \sup_{s\in[0,\tau]}\|x_f(s)-x_h(s)\|,
	\]
	where $c:=\sum_{n=1}^{\infty}\frac{(k+n)^{\alpha}}{2^n}$. By virtue of Theorem \ref{ContinuityDependence}, $\sup_{s\in[0,\tau]}\|x_f(s)-x_h(s)\|\to 0$ as $\rho(f,g)\to 0$. Consequently, $T_{\tau}$ is continuous.
	
	\smallskip
	
	To complete the proof, we show that $\{T_{\tau}:\tau\in\R^+\}$ forms a semi-group. Note that
	$$
	x_f(t)=f(t)+   \int_0^t a(t,s)  g(x_f(s))\;ds.
	$$
	Then
	\begin{eqnarray*}
		x_f(t+\tau) &  =  & f(t+\tau)+   \int_0^{t+\tau} a(t+\tau,s)  g(x_f(s))\;ds
		\\[1.2ex]
		&  =  & f(t+\tau)+   \left( \int_0^{\tau} + \int_{\tau}^{t+\tau} \right) a(t+\tau,s)  g(x_f(s))\;ds
		\\[1.2ex]
		&  =  & (T_{\tau} f)(t) + \int_{\tau}^{t+\tau} a(t+\tau,s)  g(x_f(s))\;ds
		\\[1.2ex]
		&  =  & (T_{\tau} f)(t) +\int_{0}^{t }   a(t+\tau,r+ \tau)  g(x_f(r + \tau))\;dr, \qquad  (r = s -\tau),
		\\[1.2ex]
		&  =  & (T_{\tau} f)(t) +\int_{0}^{t }   a(t,r)  g(x_f(r + \tau))\;dr .
	\end{eqnarray*}
	Hence by the existence and uniqueness of solutions $x_f(t+\tau)=\psi(t)$, where $\psi(t)$ is a solution of
	$$
	\psi(t)= (T_{\tau} f)(t)  +   \int_0^t a(t,s)  g(\psi(s))\;ds.
	$$
	We also have
	\begin{eqnarray*}
		\left( T_{\sigma}\left(T_{\tau} f)\right)\right)(\theta)  &  =  & (T_{\tau} f)(\sigma+\theta) +\int_{0}^{\sigma}   a(\sigma+\theta,s)  g(\psi(s))\;ds
		\\[1.2ex]
		&  =  &  f(\tau+\sigma+\theta) +  \int_0^{\tau} a(\tau+\sigma +\theta,s)  g(x_f(s))\;ds
		\\[1.2ex]
		&    &   \qquad  \qquad   +\int_{0}^{\sigma}   a(\sigma+\theta,s)  g(\psi(s))\;ds
		\\[1.2ex]
		&  =  &  f(\tau+\sigma+\theta) +  \int_0^{\tau} a(\tau+\sigma +\theta,s)  g(x_f(s))\;ds
		\\[1.2ex]
		&    &   \qquad  \qquad  + \int_{\tau}^{\tau+ \sigma}   a(\sigma+\theta,r-\tau)  g(\psi(r-\tau)) \;dr, \qquad  (r = s +\tau).
	\end{eqnarray*}
	Since $a(\sigma+\theta,r-\tau)$ $=$ $a(\tau+\sigma+\theta,r)$ and $\psi(r-\tau)=x_f(r)$ it follows that
	\begin{eqnarray*}
		\left( T_{\sigma}\left(T_{\tau} f)\right)\right)(\theta)  &  =  &f(\tau+\sigma+\theta) +  \int_0^{\tau} a(\tau+\sigma +\theta,s)  g(x_f(s))\;ds
		\\[1.2ex]
		&     &   \qquad  \qquad  + \int_{\tau}^{ \tau+ \sigma}   a(\tau+ \sigma+\theta,r)  g(x_f(r)) \;dr.
	\end{eqnarray*}
	This gives
	$$
	\left( T_{\sigma}\left(T_{\tau} f\right)\right)(\theta) = f(\tau+\sigma+\theta) +  \int_0^{\tau+\sigma} a(\tau+\sigma +\theta,s)  g(x_f(s))\;ds
	$$
	On  the other hand from the definition of the operator as in \eqref{Semigroup}
	$$
	\left(T_{\sigma+\tau} f\right)(\theta) = f(\tau+\sigma+\theta) +  \int_0^{\tau+\sigma} a(\tau+\sigma +\theta,s)  g(x_f(s))\;ds
	$$
	This means that
	$$
	\left(T_{\sigma+\tau} f\right)(\theta) =  \left( T_{\sigma}\left(T_{\tau} f\right)\right)(\theta), \qquad \forall \tau, \theta, \sigma  \geq 0, \,\, f \in \mathfrak{C},
	$$
	that is
	$$
	T_{\sigma+\tau} f =   T_{\sigma}\left(T_{\tau}\right) f,\qquad \forall  \tau,  \sigma  \geq 0, \, \,\, f \in \mathfrak{C}.
	$$
	
\end{proof}

\smallskip

\begin{remark} As in \cite{sell},  we say that a vector field $g$ in the integral  equation \eqref{SVIE}  is \emph{admissible} if it has a globally defined unique solution for each  $f$ $\in$
	$\mathfrak{C}$ with continuity in initial data. This holds if $g$ is globally Lipschitz continuous as above, but  weaker assumptions are also possible.  The theorem below  also holds for such
	admissible vector fields.
\end{remark}

\begin{remark}
	The above results   also hold for autonomous  Caputo fractional differential equations with a substantial time derivative, i.e., of the form
	$$
	x(t)=x_0+\frac{1}{\Gamma(\alpha)}  \int_0^t (t-s)^{\alpha -1}e^{-\beta (t-s)} g(x(s))\;ds,
	$$
	where $\beta$ $>$ $0$.  This can be seen by replacing $a(t,s)$ by
	$$
	\tilde{a}(t,s) :=  \frac{1}{\Gamma(\alpha)}  (t-s)^{\alpha -1}e^{-\beta (t-s)},  \qquad 0\leq s < t.
	$$
	Note that $0$ $<$ $\tilde{a}(t,s)$ $\leq$ $a(t,s)$.
\end{remark}

\section{Attractors of the Caputo semi-dynamical system}

The theory of autonomous semi-dynamical systems \cite{KR} can be applied to the Caputo semi-group defined above.

The solution $x(t,x_0)$  of the  autonomous  Caputo FDE  \eqref{acfde} corresponds to a  constant function $f_0(t)$ $\equiv$ $x_0$ and
$$
x(t,x_0)  \equiv (T_t f_0)(0).
$$
Thus, when the semi-group    $\{T_\tau, \tau \in \R^+\}$ has an attractor  $\mathfrak{A}$ $\subset$   $\mathfrak{C}$,   then   an  omega limit point $x$ $\in$ $\mathbb{R}^d$  of   trajectories of the Caputo FDE  satisfies $x$ $=$ $f(0)$ for some   function $f$  $\in$   $\mathfrak{A}$.

In particular, if $g(x^*)$ $=$ $0$, then $f^*$  $\in$   $\mathfrak{A}$ for the constant function $f^*(t)$   $\equiv$ $x^*$, i.e.,  $x^*$ is a steady  state solution of the system. But   there may be   functions $f^*$  $\in$   $\mathfrak{A}$  that are not constant functions, so the strict inclusion,  $\Omega$ $\subsetneq$  $\mathfrak{A}(0)$ usually holds, where $\Omega$
is the union of all the above   omega limits points.

\section{Non-autonomous Caputo  FDE: skew-product  flow}

The above result can be extended to  the nonautonomous  case with a time dependent vector field $g(t,x)$. Then, again following Sell \cite{sell}, we can show that a non-autonomous Caputo  fractional differential equation generates a skew-product flow. We just sketch the details here.

In particular, we now use the  integral equation
\begin{equation}\label{NAIE}
x(t)=x_0+\frac{1}{\Gamma(\alpha)}
\int_0^t (t-s)^{\alpha -1} g(s, x(s))\;ds.
\end{equation}
We define the shift mappings
$$
g_\tau(\cdot,x) = g(\tau+ \cdot, x)
$$
and  (to match Sell's notation in  \cite{sell})
$$
a_\tau(\cdot,\cdot) = a(\tau+ \cdot, \tau+ \cdot),
$$

\medskip

Then, following  Sell  \cite{sell}, we  define
$$
(T_{\tau} (f,g))(\theta) =f(\tau + \theta)+   \int_0^{\tau} a_\tau(t+\theta,s)  g_\tau(\phi(s))\;ds,
$$
so in our case  we have in fact
$$
(T_{\tau} (f,g))(\theta) =f(\tau + \theta)+   \int_0^{\tau} a(t+\theta,s)  g_\tau(\phi(s))\;ds,  \qquad \theta \in \R^+.
$$
(Since  $a_\tau(t,s) = a(t, s)$ is  our case, it need not be considered as an independent  variable here).

\medskip

In the autonomous case  $a$ and $g$ were  fixed  functions, so they appeared  just parameters in  the  operators $T_\tau$.
Now, both  $f$ and $g$ can vary in time,  so they are the independent variables that  determine  the  operators $T_\tau$.

\medskip

Let $\mathfrak{G}$ be an appropriate space of admissible functions  $g$ $:$ $\R^+ \times \R^d$  $\rightarrow$ $\R^d$, see Sell  \cite{sell} for some examples of such spaces. We can introduce a semi-group  $\theta_\tau$ $:$ $\mathfrak{G}$  $\rightarrow$ $\mathfrak{G}$ defined by the shift $\theta_\tau g$ $:=$ $g_\tau$ as our ``driving system".  Then we obtain  a \emph{skew-product flow}
$$
\Pi : \R^+ \times \mathfrak{C} \times \mathfrak{G}  \rightarrow \mathfrak{C} \times \mathfrak{G}
$$
defined by
$$
\Pi(\tau,f,g) := \left(T_{\tau} (f,g), g_\tau \right).
$$

\smallskip

The proof   is  similar to that above with a bit more complicated  notation. It is  exactly as in Sell \cite{sell}, pages 178-179. Essentially, here the operator $T_\tau(f,g)$ $:$
$\mathfrak{C} \times \mathfrak{G}$  $\rightarrow$ $\mathfrak{C}$ for each $\tau$ $\in$ $\R^+$ satisfies a cocycle property with respect to the driving system $\theta$, \cite{KR}.

%\section*{Rs}eference

%
%
\end{document}